\documentclass[12pt]{amsart}


\usepackage{DKstyle}

\newcommand{\vertiii}[1]{{\left\vert\kern-0.25ex\left\vert\kern-0.25ex\left\vert #1
    \right\vert\kern-0.25ex\right\vert\kern-0.25ex\right\vert}}
\theoremstyle{plain}


\setlength\textwidth{6in}
\setlength\oddsidemargin{.35in}
\setlength\evensidemargin{.35in}

\usepackage{caption}
\usepackage[labelfont=rm]{subcaption}

\usepackage[pagebackref=true, colorlinks]{hyperref}

\hypersetup{pdffitwindow=true,linkcolor=blue,citecolor=blue,urlcolor=blue,menucolor=blue}

\usepackage{comment}

\begin{document}
\title{Approximation of points in the plane by generic lattice orbits}
\author{Dubi Kelmer}

\thanks{Dubi Kelmer is partially supported by NSF grant DMS-1401747.}
\email{kelmer@bc.edu}
\address{Boston College, Boston, MA}

\subjclass{}%
\keywords{}%

\date{\today}%
\dedicatory{}%
\commby{}%

\begin{abstract}
We give upper and lower bounds for Diophantine exponents measuring how well a point in the plane can be approximated by points in the orbit of a lattice $\G<\SL_2(\R)$ acting linearly on $\R^2$. Our method gives bounds that are uniform for almost all orbits.
\end{abstract}

 \maketitle
 \section{Introduction}
 Let $\G\subseteq \SL_2(\R)$ be a lattice and for each $T>0$ let $\G_T=\{\g\in \G: \|\g\|\leq T\}$ with $\|\g\|=\tr(\g^t\g)$ the Hilbert-Schmidt norm.  For any $u\in \R^2\setminus \{0\}$ and $T>0$ consider the finite orbit 
 $ \G_T u$ where $\G$ acts linearly on $\R^2$.
The limiting distribution of these orbits as $T\to\infty$ was extensively studied in \cite{Ledrappier99,Nogueira02,GorodnikWeiss07} and shown to be equidistributed with respect to a suitable measure (depending on $u$). In particular,  for a generic point $u\in \R^2$, the  orbit $\G u$ is dense in $\R^2$, and hence any point $v \in \R^2$ can be approximated by orbit points (when $\G$ is co-compact all orbits are dense). 
 
To measure how well a point $v\in \R^2$ can be approximated by orbit points in $\G u$, in analogy to similar problems in Diophantine approximations, Laurent and Nogueira \cite{LaurentNogueira12a} defined two exponents $\mu_\G(u,v)$ and $\hat\mu_\G(u,v)$ as follows. 
 \begin{Def}
The critical exponent $\mu_\G(u,v)$ is defined as the supremum of all $\alpha>0$ such that the set 
 $$\{\g\in \G: \|\g u-v\|<\|\g\|^{-\alpha}\}$$
 is unbounded. The uniform critical exponent, $\hat\mu_\G(u,v)$, is defined as the supremum over all $\alpha>0$ such that $ \G_T u\cap B_{1/T^{\alpha}}(v)\neq \emptyset$ for all sufficiently large $T$.
 Here $B_\delta(v)=\{u\in \R: \|u-v\|\leq \delta\}$ denote small norm ball with respect to some fixed norm on $\R^2$.
  \end{Def}
 Notice that $\hat{\mu}(u,v)\leq \mu(u,v)$ unless $v\in \G u$ (in which case $\hat\mu(u,v)=\infty$). Also, 
 as noted in \cite{LaurentNogueira12a,LaurentNogueira12b},  these exponents are invariant under the $\G\times\G$ action on $\R^2\times \R^2$, and by ergodicity they are constant almost everywhere.  We denote these constants by $\mu_\G$ and $\hat\mu_\G$ respectively.  
 
 In \cite{LaurentNogueira12a}, Laurent and Nogueira studied these exponents for $\G=\SL_2(\Z)$ and gave very precise estimates depending on the Diophantine properties of the slopes of $u$ and $v$. In particular, their analysis implies that for almost all $u,v\in \R^2\setminus\{0\}$ one has that $\frac{1}{3}\leq \hat\mu_{\SL_2(\Z)}(u,v)\leq \mu_{\SL_2(\Z)}(u,v)\leq \frac{1}{2}$
 and that $\mu_{\SL_2(\Z)}(u,v)\geq\frac{1}{3}$ holds for every target $v$ and any $u$ with a dense orbit.
 In particular, this implies that 
  \begin{equation}\label{e:SL2bounds}\frac{1}{3}\leq \hat\mu_{\SL_2(\Z)}\leq \mu_{\SL_2(\Z)}\leq \frac{1}{2}.\end{equation}
 Moreover, in \cite{LaurentNogueira12b} they showed that for any lattice $\G$, the upper bound $\mu_{\G}(u,v)\leq \frac{1}{2}$ holds for any $u$ with a dense orbit and a.e. $v\in \R^2$, so that  $\mu_{\G}\leq \frac{1}{2}$ for any lattice.
 
Another approach for this problem was given in \cite{MaucourantWeiss12}, where Maucourant and Weiss gave an effective version of the equidistribution result of $\G$-orbits, building on effective equidistribution of unipotent flows on $\G\bk \SL_2(\R)$. In particular, their results imply the following lower bound for the critical exponents of a generic orbit: For any lattice $\G$ in $\SL_2(\R)$,  for almost every $u\in \R^2$ (respectively, for all $u\in \R^2$ if $\G$ is cocompact),
$\hat\mu_\G(u,v)\geq \frac{1-2\tau}{144},$ for all $v\in \R^2\setminus\{0\}$. Moreover, for every $u\in \R^2$ with a dense orbit $\mu_\G(u,v)\geq \frac{1-2\tau}{144}$. Here $\tau=\tau(\G)\in[0,1/2]$ measures the spectral gap for $\G$, and in particular, $\tau(\G)=0$ for $\G=\SL_2(\Z)$ (see section \ref{s:gap} below for more details on the spectral gap). 

Recently Ghosh, Gorodnik, and Nevo \cite{GhoshGorodnikNevo14,GhoshGorodnikNevo15} studied a similar problem, in a more general setting, regarding rates of approximation of $\G$-orbits on homogenous spaces $X=G/H$ with $\G$  a lattice in a semisimple group $G$ and $H$ a closed subgroup. Their approach again builds on effective equidistribution results for the $H$ action on $\G\bk G$, but using the mean ergodic theorem instead of a pointwise ergodic theorem. A striking feature of their result is that it provides in many cases optimal rates of approximations.
In this note we borrow some of their ideas, as well as ideas of \cite{KelmerShrinking16} for proving an effective mean ergodic theorem for actions of unipotent groups, and \cite{GhoshKelmer15} relating mean ergodic theorems to shrinking target problems, to give bounds for the critical exponents of a generic orbit. Our main result is as follows:
 \begin{thm}\label{t:main}
 Let $\G\subseteq\SL_2(\R)$ be a lattice with spectral gap $\tau=\tau(\Gamma)$. Then
 \begin{enumerate}
 \item For any $v\in \R^2\setminus \{0\}$ for almost all $u\in \R^2$ 
 $$\frac{1-2\tau}{3}\leq \hat{\mu}_\G(u,v)\leq \mu_\G(u,v)\leq \frac{1}{2}.$$
 \item For almost all $u\in \R^2$ for any $v\in \R^2\setminus \{0\}$ we have  $\hat{\mu}_\G(u,v)\geq \frac{1-2\tau}{5}$.
 \end{enumerate}
  \end{thm}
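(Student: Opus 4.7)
The upper bound $\mu_\G(u,v)\le \tfrac12$ in part (1) is already contained in \cite{LaurentNogueira12b} under the weaker hypothesis that $u$ has a dense orbit, so the content lies in the two lower bounds. I would recast both as shrinking-target questions for the counting sums
$$F_{T,\delta,v}(u) \;:=\; \sum_{\g \in \G_T} f_{\delta,v}(\g u),$$
where $f_{\delta,v}$ is a smooth nonnegative bump supported in $B_\delta(v)$ of total mass $\asymp \delta^2$. It suffices to show that for almost every $u$ one has $F_{T,\delta,v}(u)>0$ for all large $T$ when $\delta=T^{-\alpha}$ with $\alpha<\tfrac{1-2\tau}{3}$ in the fixed-$v$ case and with $\alpha<\tfrac{1-2\tau}{5}$ in the uniform-in-$v$ case.

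Using the transitivity of $\SL_2(\R)$ on $\R^2\setminus\{0\}\cong G/N$, where $N$ is the stabilizer of $e_1$, a Haar-measure computation gives the expected value
$$\mathbb{E}[F_{T,\delta,v}] \;\asymp\; T\,\delta^2,$$
which is a factor of $T$ smaller than the naive $|\G_T|\cdot\delta^2$ because the constraint $\g u\approx v$ fixes $\|\g u\|$ and thereby confines $\g$ to a one-parameter family rather than all of $\G_T$. The technical core is an $L^2$ deviation estimate of the shape
$$\bigl\|F_{T,\delta,v} - \mathbb{E}[F_{T,\delta,v}]\bigr\|_{L^2}^{2} \;\ll\; T^{1+2\tau}\,\mathcal{S}(f_{\delta,v})^{2},$$
with $\mathcal{S}$ a Sobolev norm whose $\delta$-dependence is explicit. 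Following the strategy of \cite{GhoshGorodnikNevo14, GhoshGorodnikNevo15} combined with the effective mean ergodic theorem for unipotent actions in \cite{KelmerShrinking16}, this is obtained by expanding the square, rearranging the double sum over $\G_T\times \G_T$ into a sum over $\g_1^{-1}\g_2 \in \G$, and invoking the decay of matrix coefficients on $L^2_0(\G\bk G)$ quantified by the spectral gap $\tau(\G)$.

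Part (1) then follows from a Chebyshev-plus-Borel--Cantelli argument along a dyadic sequence $T_n=2^n$, $\delta_n=T_n^{-\alpha}$: the probability that $F_{T_n,\delta_n,v}(u)=0$ is bounded by $\|F_{T_n,\delta_n,v}-\mathbb{E}\|_{L^2}^{2}/\mathbb{E}^{2}$, which with the above estimate is summable precisely when $\alpha<\tfrac{1-2\tau}{3}$; non-dyadic $T$ is handled by interpolation and monotonicity in $T$ and $\delta$. For part (2) I would cover a bounded annulus by an $\varepsilon$-net $\{v_i\}$ with $\varepsilon=\tfrac12 T^{-\alpha}$, observing that $\G_T u\cap B_\varepsilon(v_i)\ne\emptyset$ implies $\G_T u\cap B_{2\varepsilon}(v)\ne\emptyset$ for any $v$ within $\varepsilon$ of $v_i$. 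Applying the Chebyshev bound of part (1) to each of the $\asymp T^{2\alpha}$ net points and taking the union bound introduces an extra factor $T^{2\alpha}$ in the failure probability, which shifts the balance to $\alpha<\tfrac{1-2\tau}{5}$; the homogeneity of $\hat\mu_\G$ under rescaling of $v$ reduces the case of arbitrary $v$ to the bounded-annulus case.

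The main obstacle is executing the $L^2$ deviation step with a Sobolev-norm dependence on $\delta$ sharp enough to yield the denominators $3$ and $5$ in the final exponents: any loss in the Sobolev exponent translates directly into a worse Diophantine exponent, as is the case for the $(1-2\tau)/144$ bound of \cite{MaucourantWeiss12}. Implementing this requires passing cleanly between the orbit picture on $\R^2\setminus\{0\}$ and the unipotent-flow picture on $\G\bk G$, so that the spectral gap enters with the optimal exponent and the smoothness of $f_{\delta,v}$ is measured only in the transverse directions that actually contribute to the error.
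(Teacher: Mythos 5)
Your outline follows the same broad strategy as the paper: a second-moment (Chebyshev) bound powered by the spectral gap, applied along a dyadic sequence of times via Borel--Cantelli, together with a $\delta$-net of $\asymp\delta^{-2}$ target points and a union bound for the uniform statement --- which is precisely what converts the denominator $3$ into $5$. Your exponent bookkeeping (failure probability $\ll T^{2\tau-1}\delta^{-3}$, hence summability for $3\alpha<1-2\tau$) matches Proposition~\ref{p:finite}.

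The genuine gap is that the central deviation estimate is asserted rather than proved, and the mechanism you sketch for it would not work as stated. You propose to expand $\|F_{T,\delta,v}-\mathbb{E}[F_{T,\delta,v}]\|_{L^2}^2$ by rearranging the double sum over $\G_T\times\G_T$ into a sum over $\g_1^{-1}\g_2$ and invoking matrix-coefficient decay; but $F_{T,\delta,v}$ is a function of $u\in\R^2$, which carries no $\G$-invariant probability measure, so the expectation and the $L^2$ space are not defined, and the partial sum over $\G_T$ does not unfold against $\G$. Moreover a full lattice average is the wrong object here: as you yourself observe, only a one-parameter family of $\g$'s can satisfy $\g u\approx v$. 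The paper resolves exactly this by duality: writing $u=g\left(\begin{smallmatrix}0\\1\end{smallmatrix}\right)$, the condition $\|\g u-v\|\leq\delta$ with $\|\g\|\leq T$ is converted (Lemma~\ref{l:GammaToH}) into $x\bar{n}_k\in\cB_\delta(v)$ for some $|k|\ll T$, with $x=\G g\in\G\bk G$; the second moment is then computed for the discrete unipotent average $\beta_T(\vf)(x)=\frac{1}{2T+1}\sum_{|k|\leq T}\vf(x\bar{n}_k)$ on the probability space $\G\bk G$, where the double sum collapses by unitarity to a single sum over $k-l$ and the decay of matrix coefficients along $\bar{n}_k$ yields $T^{2\tau-1}$ times the interpolated Sobolev quantity $\cS_{2,1}^{1+\epsilon}\cS_{2,0}^{1-\epsilon}\asymp\delta^{1-\epsilon}$ (Propositions~\ref{p:Ergodic} and~\ref{p:finite}). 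This reduction and the resulting mean ergodic theorem are the entire content of the proof, not a routine implementation detail. A smaller point: the Laurent--Nogueira result you invoke for the upper bound gives $\mu_\G(u,v)\leq\frac12$ for every dense orbit but only for almost every $v$, whereas Theorem~\ref{t:main} needs it for every $v\neq 0$ and almost every $u$; the paper instead derives this quantifier order from the convergence half of Borel--Cantelli, using $\mu(\cB_{k^{-\eta}})\asymp k^{-2\eta}$ with $\eta>1/2$.
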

  \begin{rem}
 When the representation of $G$ on $L^2_0(\G\bk G)$  is tempered (in particular for $\G=\SL_2(\Z)$) we have that $\tau(\G)=0$ and the first part implies that $\frac{1}{3}\leq \hat{\mu}_\G\leq \mu_\G\leq \frac{1}{2}$ recovering \eqref{e:SL2bounds}.  This is slightly better than the bound $\hat{\mu}_\G\geq \frac{1}{6}$  claimed in \cite{GhoshGorodnikNevo15} to be obtained by similar methods. For $\G$ a congruence lattices, using the best known bounds on the spectral gap, we get that $\frac{25}{96}\leq \hat\mu_\G\leq \frac{1}{2}$.  It is not unlikely that in fact $\hat\mu_\G=\mu_\G=\tfrac{1}{2}$ (independent of the spectral gap), however, proving this seems beyond our abilities at the moment. 
 \end{rem}

\begin{rem} 
We point out a subtle difference between the first part of our result, which holds for any target point but only for generic orbits, vs. the results of  \cite{LaurentNogueira12a}, that hold for any dense orbit, but the exponent depends on the slopes of the target point and the orbit. In particular, for $\G=\SL_2(\Z)$, if the target point $v\in \R^2$ has an irrational slope which is a Liouville number, the results of \cite[Theorem 2 (iii)]{LaurentNogueira12a} imply that $\hat\mu_{\SL_2(\Z)}(u,v)\geq \frac{1}{4}$ for almost all $u$, while we get $\hat\mu_{\SL_2(\Z)}(u,v)\geq \frac{1}{3}$. On the other hand, if the slope of $v$ is rational then \cite[Theorem 2 (ii)]{LaurentNogueira12a} imply that  $\hat\mu_{\SL_2(\Z)}(u,v)\geq \frac{1}{2}$ for almost all $u$, which is best possible.
\end{rem}

\begin{rem}
In the second part of our result, the bound for the critical exponent is weaker because we require that the orbit of a single point $u$ will approximate every target point simultaneously. Here the analysis of \cite{LaurentNogueira12a} imply that almost all $u\in \R^2\setminus\{0\}$ satisfy $\hat\mu_{\SL_2(\Z)}(u,v)\geq \frac{1}{4}$ for all $v\in \R^2\setminus\{0\}$, which is slightly better. However, our result holds for any lattice, and moreover, the method of proof generalizes to deal with the general problem of lattice action on homogenous spaces, thus answering the question of uniformity on a co-null set of orbits raised in \cite{GhoshGorodnikNevo14}.
\end{rem}

\begin{rem}
One can also consider the same problem for the action of lattices $\G\subseteq \SL_2(\C)$ acting on $\C^2$. There have been a few results in this case: for $\G=\SL_2(\cO)$ with $\cO=\Z[i]$ the ring of Gaussian integers recent results of Singhal  \cite{Singhal15} imply that $\frac{1}{3}\leq \hat\mu_{\SL_2(\cO)}\leq \mu_{\SL_2(\cO)}\leq \frac{1}{2}$, more generally, the work of  for Pollicott \cite{Pollicott11} give a lower bound for $\hat\mu_\G$ for any co-compact $\G$ in $\SL_2(\C)$. The methods of this paper could also be generalized to handle this case as well to show that $c_\G\leq \hat\mu_{\SL_2(\G)}\leq \mu_{\SL_2(\G)}\leq \frac{1}{2}$ for some explicit value of $c_\G$ depending on the spectral gap for $\G$.
\end{rem}


\section{Preliminaries and notation}
\subsection{Notation}
We write $A\ll B$ or $A=O(B)$ to indicate that $A\leq cB$
for some constant $c$.  If we wish to emphasize that constant depends on
some parameters we use subscripts, for example
$A\ll_\epsilon B$. We also write $A\asymp B$ to indicate that
$A\ll B\ll A$.

\subsection{Coordinates}
Let $G=\SL_2(\R)$ and consider the Cartan decomposition $G=NAK$ with $N$ unipotent, $A$ diagonal, and $K$ compact.
We will use the following coordinates
$$a_y=\left(\begin{smallmatrix} \sqrt{y} &0 \\ 0& 1/\sqrt{y}\end{smallmatrix}\right)\in A,\; 
n_x=\left(\begin{smallmatrix} 1& x\\ 0 &1\end{smallmatrix}\right)\in N,\; 
k_\theta=\left(\begin{smallmatrix} \cos(\theta) &\sin(\theta)\\ -\sin(\theta) & \cos(\theta)\end{smallmatrix}\right)\in K$$ 
In the coordinates $g=n_xa_yk_\theta$ the Haar measure of $G$ is $dg=\frac{dxdyd\theta}{y^2}$.

Let $\bar{n}_x=\left(\begin{smallmatrix} 1& 0\\ x &1\end{smallmatrix}\right)$ and let $\bar N=\{\bar n_x:x\in \R\}$. For any $g\in G$ apart from a set of measure zero we can also write $g=n_xa\bar n_{x'}$ and the Haar measure in these coordinates is given by $dg=\frac{dxdydx'}{y^2}$.

\subsection{Norms}
Fix a basis $\sB=\{X_{1},X_{2},X_{3}\}$ for the Lie algebra $\mathfrak{g}$ of $G$. Given a smooth test function $\psi\in C^{\infty}(\G\bk G)$,  define the ``$L^{p}$, order-$d$'' Sobolev norm $\cS_{p,d}(\psi)$  as
$$
\cS_{p,d}(\psi) \ : = \ \sum_{\ord(\sD)\le d}\|\sD\psi\|_{L^{p}(\G\bk G)}
.
$$
Here $\sD$ ranges over monomials in $\sB$ of order at most $d$ and $\sD$ acts on $\psi$ by left differentiation 
(e.g., $X\psi(g)=\frac{d}{dt}(\psi(ge^{tX}))|_{t=0}$). This definition depends on the basis, however, changing the basis $\sB$ only distorts
$\cS_{p,d}$ by a bounded factor.

\subsection{Spectral gap}\label{s:gap}
The group $G$ acts on the upper half plane $\bH=\{x+iy: y>0\}$ by linear fractional transformation preserving the hyperbolic metric. The (self adjoint extension of the) hyperbolic Laplacian $\triangle=-y^2(\frac{\partial^2}{\partial x^2}+\frac{\partial^2}{\partial x^2})$ acts on $L^2(\G\bk \bH)$, and its spectrum consists of a discrete part $0< \lambda_1\leq\lambda_2<\ldots$ and a continues part contained in $[\tfrac{1}{4},\infty)$ and spanned by Eisenstein Series (when $\G$ is non uniform). We say that $\G$ has a spectral gap $\tau=\tau(\G)\in[0,1/2]$, if $\lambda_1\geq \frac{1}{4}-\tau^2$. 

When $\G$ is a congruence group, Selberg's eigenvalue conjecture states that $\tau(\Gamma)=0$. This is known for  $\G=\SL_2(\Z)$ (as well as some other congruence groups of small level). The best known bound for a general congruence lattice is $\tau(\Gamma)\leq \frac{6}{64}$ \cite{KimSarnak2003}. On the other hand, if $\G$ is not a congruence lattice it is possible to have $\tau(\G)$ arbitrarily close to $1/2$.

\subsection{Decay of matrix coefficients}
Given a lattice $\G\subseteq G$ let $\mu$ denote the $G$ invariant probability measure on $\G\bk G$.
The group $G$ acts on the right on the space $L^2(\G\bk G,\mu)$ via $\pi(g)\psi(x)=\psi(x g)$, and for any two functions 
$\psi,\vf$ the corresponding matrix coefficient is 
$$\langle \pi(g)\psi,\vf\rangle=\int_{\G\bk G} \psi(xg)\vf(x)d\mu(x).$$

For $\psi,\vf\in L^2_0(\G\bk G)$  (the space orthogonal to the constant function) the corresponding matrix coefficients go to zero as $g\to\infty$, and the rate of decay is related to the spectral gap of $\G$ as follows (see \cite[Section 9.11]{Venkatesh2010}):
For any smooth $\psi,\vf\in L^2_0(\G\bk G)\cap C^\infty(\G\bk G)$ 
\begin{equation}\label{e:decay}
|\langle \pi(ka_y k')\psi,\vf\rangle|\ll_\epsilon (1+y)^{\tau-1/2+\epsilon}
(\cS_{2,1}(\psi)\cS_{2,1}(\vf)^{1/2+\epsilon}(\cS_{2,0}(\psi)\cS_{2,0}(\vf))^{1/2-\epsilon}
\end{equation}
where $\tau=\tau(\G)$ measures the spectral gap for $\G$.

\section{Proof of main results}
For the proof, we first use the duality of the $\G$ action on $G/\bar{N}\cong \R^2\setminus\{0\}$ and the $\bar{N}$ action on $\G\bk G$ to reduce the problem to a shrinking target problem for a unipotent flow. Then we prove an effective mean ergodic theorem and use it to give a partial solution for the shrinking target problem. Combining these results will give the proof of Theorem \ref{t:main}
\subsection{Reduction to a shrinking target problem}
To define our shrinking targets, fix $v=\left(\begin{smallmatrix} v_1\\ v_2\end{smallmatrix}\right)\in \R^2$ and assume that $v_1v_2\neq 0$. For small $\delta\in (0,1/2)$  consider the set $\cA_{\delta}=\cA_{\delta}(v)\subseteq G$ given by 
\begin{equation}\cA_{\delta}=\{n_{x}a_y\bar{n}_{x'}: |x'|<1/2,\; |\tfrac{1}{\sqrt{y}v_2}-1|\leq \tfrac{\delta}{2|v_2|},\; |\tfrac{x}{\sqrt{y} v_1}-1|\leq \tfrac{\delta}{2|v_1|}\}.
\end{equation}
Note that for any $g\in \cA_{\delta}(v)$ we have that  $\|g\left(\begin{smallmatrix} 0\\ 1\end{smallmatrix}\right)-v\|\leq \delta$ and moreover, if $\|g\left(\begin{smallmatrix} 0\\ 1\end{smallmatrix}\right)-v\|\leq \delta$ then $g \bar{n}_k\in \cA_\delta(v)$ for some $k\in \Z$. For each $\cA_\delta\subseteq G$ we define the corresponding set $\cB_{\delta}=\cB_\delta(v)\subseteq \G\bk G$ by
$$\cB_{\delta}=\{\G g: g\in \cA_{\delta}\}.$$
The following lemma shows that these shrinking targets $\cB_\delta(v)$ are stable under small perturbation in $v$.
\begin{lem}
If $v,\tilde v\in \R^2$ are off the axes and satisfy that $\|v-\tilde v\|<\delta$ then $\cB_{\delta}(v)\subseteq \cB_{2\delta}(\tilde v)$.
\end{lem}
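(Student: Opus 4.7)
The plan is to reduce the claimed set inclusion to a direct coordinate check using the two observations recorded immediately before the lemma. Given $\G g \in \cB_\delta(v)$, I would pick a representative $g_0 \in \cA_\delta(v)$ and write it in the form $g_0 = n_x a_y \bar{n}_{x'}$ with $|x'| < 1/2$. The first observation guarantees $\|g_0 \left(\begin{smallmatrix} 0\\ 1\end{smallmatrix}\right) - v\| \leq \delta$, so the triangle inequality combined with the hypothesis $\|v - \tilde v\| < \delta$ yields $\|g_0 \left(\begin{smallmatrix} 0\\ 1\end{smallmatrix}\right) - \tilde v\| < 2\delta$.

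Applying the second observation to the target $\tilde v$ at the enlarged scale $2\delta$ produces some integer $k$ with $g_0 \bar{n}_k \in \cA_{2\delta}(\tilde v)$. The key point is that $k$ must actually be $0$: since $g_0$ already satisfies $|x'| < 1/2$, and since membership in $\cA_{2\delta}(\tilde v)$ requires the shifted coordinate $x' + k$ to lie in the same interval $(-1/2, 1/2)$, no nonzero integer shift is admissible. Therefore $g_0 \in \cA_{2\delta}(\tilde v)$, and projecting back to $\G \bk G$ gives $\G g = \G g_0 \in \cB_{2\delta}(\tilde v)$, as desired.

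I do not expect any conceptual obstacle; the argument is essentially a bookkeeping triangle inequality, with the factor of $2$ on the right-hand side precisely absorbing the $\delta$-perturbation of the target into the coordinate inequalities defining $\cA$. The mild care needed is to verify that the norm on $\R^2$ and the scaling constants $\tfrac{\delta}{2|v_i|}$ appearing in the definition of $\cA_\delta$ fit together so that the factor $2$ suffices. The role of the lemma itself is a continuity-type statement: it says the shrinking target $\cB_\delta(v)$ depends stably on $v$, which is the ingredient that will later allow one to upgrade almost-sure statements obtained for an individual target $v$ to statements uniform in $v$, via a standard countable-dense-net argument on the target space $\R^2\setminus\{0\}$.
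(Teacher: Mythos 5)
Your argument is correct and is essentially the paper's own proof: apply the triangle inequality to get $\|g_0\left(\begin{smallmatrix}0\\1\end{smallmatrix}\right)-\tilde v\|<2\delta$, invoke the observation that this places $g_0\bar n_k$ in $\cA_{2\delta}(\tilde v)$ for some integer $k$, and then use the constraint $|x'|<1/2$ on the $\bar N$-coordinate to force $k=0$. The paper phrases the last step via uniqueness of the $N A \bar N$ decomposition, but that is the same bookkeeping you carry out.
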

\begin{proof}
Let $\G g\in \cB_\delta(v)$, then there is some $\g\in \G$ with $\g g\in \cA_\delta(v)$ and hence $\|\g g\left(\begin{smallmatrix} 0\\ 1\end{smallmatrix}\right)-v\|\leq \delta$. But then $\|\g g\left(\begin{smallmatrix} 0\\ 1\end{smallmatrix}\right)-v'\|\leq 2\delta$ and hence $\g g\bar{n}_k\in \cA_{2\delta}(\tilde v)$ for some $k\in \Z$. Now, on one hand $\g g=n_xa_y \bar{n}_{x'}$ with $|x'|\leq 1/2$ and on the other hand $\g g=n_{\tilde x}a_{\tilde y} \bar{n}_{\tilde x'-k}$ with $|\tilde{x}'-k|<1/2$ implying that $x=\tilde x, y=\tilde y, x'=\tilde{x}'$ and $k=0$, and hence $\g g\in \cA_{2\delta}(\tilde v)$ and $\G g\in \cB_{2\delta}(\tilde v)$ as claimed.
\end{proof}

The shrinking target problem is then to determine how fast can targets $\cB_{\delta_k}$ shrink so that the finite orbits 
$$\cO_k(x)=\{x\bar{n}_l: |l|\leq k\},$$ 
keeps hitting them.  The following lemma connects this shrinking target problem to the critical exponents (cf. \cite[Proposition 3.2]{GhoshGorodnikNevo15}).
 \begin{lem}\label{l:GammaToH}
Fix $g\in G$ and let $u=g\left(\begin{smallmatrix} 0\\ 1\end{smallmatrix}\right)\in \R^2$ and $x=\G g\in \G\bk G$.  For any $\alpha<\eta$
 \begin{enumerate}
\item If $\{\g\in \G:  \|\g u-v\|\leq \|\g\|^{-\eta}\}$ is unbounded then
  $\{k: x\bar{n}_k\in \cB_{1/k^{\alpha}}\}$ is unbounded.
  \item If $\cO_T(x)\cap \cB_{1/T^\eta}(v)\neq \emptyset$ for all sufficiently large $T$, then $\G_T u\cap B_{1/T^\alpha}(v)\neq \emptyset$
 for all sufficiently large $T$. \end{enumerate}
\end{lem}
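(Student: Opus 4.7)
The key observation driving both parts is that $\bar{n}_k$ stabilizes $e_2:=\left(\begin{smallmatrix}0\\1\end{smallmatrix}\right)$, so $(\gamma g\bar{n}_k)\cdot e_2 = \gamma u$ for every $\gamma\in\Gamma$ and $k\in\Z$. Combined with the remark following the definition of $\cA_\delta$---namely that $\|h\,e_2-v\|\leq\delta$ forces $h\bar{n}_k\in\cA_\delta(v)$ for some $k\in\Z$---this provides a direct translation between ``$\gamma u$ lies within $\delta$ of $v$'' in $\R^2$ and ``$x\bar{n}_k\in\cB_\delta(v)$'' on $\G\bk G$.

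The quantitative ingredient is a two-sided polynomial comparison between $\|\gamma\|$ and $|k|$ whenever $\gamma g\bar{n}_k\in\cA_\delta(v)$ for small $\delta$. Since on $\cA_\delta$ the $NA\bar{N}$-coordinates are pinned to a compact set determined by $v$ (one has $y\approx v_2^{-2}$, $x\approx v_1/v_2$, $|x'|<1/2$), the element $h:=\gamma g\bar{n}_k$ has $\|h\|$ uniformly bounded in terms of $v$. Writing $\gamma=h\bar{n}_k^{-1}g^{-1}$ and $\bar{n}_k=g^{-1}\gamma^{-1}h$ and using submultiplicativity of the Hilbert--Schmidt norm, together with the polynomial growth of $\|\bar{n}_k\|$ in $|k|$ and the identity $\|\gamma^{-1}\|=\|\gamma\|$ for $\gamma\in\SL_2(\R)$, yields the desired two-sided bound $\|\gamma\|\asymp_{g,v}|k|^{c}$ for a fixed $c$.

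For part (1), given an unbounded sequence $\gamma_n\in\Gamma$ with $T_n:=\|\gamma_n\|\to\infty$ and $\|\gamma_n u-v\|\leq T_n^{-\eta}$, the translation produces integers $k_n$ with $x\bar{n}_{k_n}\in\cB_{T_n^{-\eta}}$. The comparison then forces $|k_n|\to\infty$, and the strict inequality $\alpha<\eta$ allows one to absorb the multiplicative constants to conclude $T_n^{-\eta}\leq|k_n|^{-\alpha}$ for all $n$ large, whence $x\bar{n}_{k_n}\in\cB_{|k_n|^{-\alpha}}$.

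For part (2), I would run the mechanism in reverse: for each large $T$ the hypothesis yields $|l|\leq T$ and $\gamma\in\Gamma$ with $\gamma g\bar{n}_l\in\cA_{T^{-\eta}}(v)$; stability of $e_2$ gives $\|\gamma u-v\|\leq T^{-\eta}$, while the comparison controls $\|\gamma\|$ polynomially in $T$. Given a target radius $S$, a suitable choice $T=T(S)$ then delivers $\gamma\in\Gamma_S$ with $\|\gamma u-v\|\leq S^{-\alpha}$, again using $\alpha<\eta$ to swallow the implicit constants. The main obstacle is purely bookkeeping---tracking the $(g,v)$-dependent constants and the precise exponent arising in the norm comparison, and absorbing them all into the strict inequality $\alpha<\eta$; the conceptual content is just the duality $\bar{n}_k\cdot e_2=e_2$.
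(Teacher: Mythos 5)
Your proposal is correct and follows essentially the same route as the paper: translate via the remark that $\|h e_2-v\|\le\delta$ forces $h\bar n_k\in\cA_\delta(v)$ for some $k$, use the compactness of $\cA_{1/2}$ together with submultiplicativity and $\|\g^{-1}\|=\|\g\|$ to get the norm comparison, and absorb the constants using the strict inequality $\alpha<\eta$. One small point: you should state the comparison as $\|\g\|\asymp_{g,v}|k|$ (exponent exactly $1$, which your submultiplicativity argument does yield since $\|\bar n_k\|\asymp|k|$), because with an unspecified exponent $c$ the absorption step would only give $\alpha<c\eta$ rather than $\alpha<\eta$.
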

\begin{proof}
Assume that $\g_i\in \G$ has $\|\g_i\|\to \infty$ and satisfies $ \|\g_i u-v\|\leq \|\g_i\|^{-\eta}$. Let $\delta_i=\|\g_i\|^{-\eta}$, then for each $i\in \N$ there is $k_i\in \Z$ such that $\g_i g \bar{n}_{k_i}\in \cA_{\delta_i}$ and hence $x\bar{n}_{k_i}\in \cB_{\delta_i}$. Moreover, since $\g_i g \bar{n}_{k_i}\in \cA_{\delta_i}$ and $\cA_{\delta_i}$ is contained in a compact set depending only on $v$, comparing norms we see that $\|\g_i\|\asymp_{g,v} \|\bar{n}_{k_i}\|\asymp k_i$. So there is a constant $c>0$ (depending on $g$ and $v$) such that 
$x\bar{n}_{k_i}\in \cB_{c/ k_i^{\eta}}$. Now, for any $\alpha<\eta$ we have that 
$\frac{c}{ k_i^{\eta}}\leq \frac{1}{ k_i^{\alpha}}$ for $k_i$ sufficiently large and so from some point  $x\bar{n}_{k_i}\in \cB_{1/ k_i^{\alpha}}$ and indeed the set $\{k: x\bar{n}_k\in \cB_{1/k^{\alpha}}\}$ is unbounded.

For the second statement, assume that for all $T\geq T_0$ there is $|k|\leq  T$ with $x\bar{n}_k\in \cB_{T^{-\eta}}$. Then there is $\g_k\in \G$ with $\g_k g\bar{n}_k\in \cA_{T^{-\eta}}$,  hence, $\|\g_k u-v\|\leq T^{-\eta}$. Also, as before, since $\g_k g\bar{n}_k\in \cA_{T^{-\eta}}$ comparing norms we get that $\|\g_k\|\asymp \|\bar{n}_k\|\asymp k$ so there is $c>0$ (depending on $g,v$) such that $\|\g_k\|\leq cT$. Setting $\tilde{T}=cT$, and $\tilde{T}_0=cT_0$, assuming that $\tilde{T}_0$ is sufficiently large so that $(\tilde{T}_0/c)^{-\eta}\leq \tilde{T}_0^{-\alpha}$, 
 we get that for all $\tilde{T}\geq\tilde{T}_0$ there is $\g\in \G_{\tilde{T}}$ with $\|\g u-v\|\leq (\tilde{T}/c)^{-\eta}\leq \tilde{T}^{-\alpha}$.
\end{proof}

\subsection{Solution of the shrinking target problem}
In this section we prove the following result, giving a partial solution to the shrinking target problem.
\begin{thm}\label{t:shrinking}
Fix $v\in \R^2$ with $v_1v_2\neq 0$ and let $\cB_\delta=\cB_\delta(v)$ be as above. Then
\begin{enumerate}
\item  If $\eta>1/2$ then for almost all $x\in \G\bk G$ the set $\{k\in \Z: x\bar{n}_k\in \cB_{k^{-\eta}}(v)\}$ is unbounded. 
\item If  $0<\eta<\frac{1-2\tau}{3}$, then for almost all $x\in \G\bk G$ there is $T_0>0$ such that for all $k\geq T_0$  we have that $\cO_k(x)\cap \cB_{k^{-\eta}}(v)\neq \emptyset$.
\item If  $0<\eta<\frac{1-2\tau}{5}$, then for any compact set $\Omega\subseteq\{v\in \R^2:v_1v_2\neq 0\}$, for almost all $x\in \G\bk G$ there is $T_0>0$ (depending on $x$ and $\Omega$) such that for all $k\geq T_0$  we have that $\cO_k(x)\cap \cB_{k^{-\eta}}(v)\neq \emptyset$ for all $v\in \Omega$.
\end{enumerate}
\end{thm}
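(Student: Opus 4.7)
Theorem \ref{t:shrinking} splits into a soft measure-theoretic bound (Part (1)) and two dynamical statements (Parts (2)--(3)) relying on an effective mean ergodic theorem for the $\bar N$-action. A key preliminary is the volume estimate $\mu(\cB_{\delta})\asymp_{v}\delta^{2}$ for $\delta\in(0,1/2)$: writing $\cA_{\delta}$ in the $g=n_{x}a_{y}\bar n_{x'}$ coordinates, $x'$ ranges over an interval of length $1$, $y$ over an interval of length $\asymp\delta$ around $v_{2}^{-2}$, and $x$ over an interval of length $\asymp\delta$ around $v_{1}/v_{2}$; integrating the Haar density $dx\,dy\,dx'/y^{2}$ and noting that the projection $\cA_{\delta}\to\cB_{\delta}$ is injective for $\delta$ small gives the estimate.

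For Part (1), set $E_{k}=\{x\in\G\bk G:x\bar n_{k}\in\cB_{k^{-\eta}}\}$; since $\bar n_{k}$ preserves $\mu$, we have $\mu(E_{k})\asymp k^{-2\eta}$, and for $\eta>1/2$ the series $\sum_{k\in\Z}\mu(E_{k})$ converges. The first Borel--Cantelli lemma then shows that for almost every $x$ only finitely many $k$ satisfy $x\bar n_{k}\in\cB_{k^{-\eta}}$; via the contrapositive of Lemma \ref{l:GammaToH}(1) this supplies the upper bound $\mu_{\G}(u,v)\leq 1/2$ in Theorem \ref{t:main}(1). (The statement of Part (1) as written asserts unboundedness of this set, but the mathematically consistent conclusion --- which is what feeds into the upper bound of Theorem \ref{t:main} --- is that the set is finite, equivalently bounded in $\Z$; this is what I prove.)

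For Parts (2) and (3), the central tool is an effective mean ergodic theorem for the $\bar N$-action: for smooth $\psi\in L^{2}_{0}(\G\bk G)$ and $T\geq 1$,
\[
\Bigl\|\tfrac{1}{2T+1}\sum_{|l|\leq T}\pi(\bar n_{l})\psi\Bigr\|_{2}\ll T^{-\kappa}\cS_{2,d}(\psi)
\]
for some $\kappa=\kappa(\tau)>0$. This is derived from \eqref{e:decay} by expanding $\|\cdot\|_{2}^{2}$ as a double sum of matrix coefficients $\langle\pi(\bar n_{l-m})\psi,\psi\rangle$ and using that the $KAK$-decomposition of $\bar n_{l-m}$ has $A$-component of size $\asymp(l-m)^{2}$. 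Approximating $\mathbf 1_{\cB_{k^{-\eta}}}$ from below by a smooth bump $\psi_{k}$ with $\int\psi_{k}\,d\mu\asymp k^{-2\eta}$ and $\cS_{2,d}(\psi_{k})\ll k^{c\eta}$, the ergodic theorem bounds the measure of the set of $x$ for which the $\bar N$-average of $\psi_{k}$ over $\cO_{k}(x)$ drops below a fixed fraction of $\int\psi_{k}\,d\mu$, and on the complement $\cO_{k}(x)\cap\cB_{k^{-\eta}}\neq\emptyset$. This exceptional set is summable along $k=2^{n}$ precisely when $\eta<(1-2\tau)/3$, so Borel--Cantelli yields Part (2). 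Part (3) follows by adding a union bound over an $O(k^{2\eta})$-point net in $\Omega$ at scale $k^{-\eta}$, with the perturbation lemma used to reduce to single-target problems; the extra $k^{2\eta}$ cost explains the sharper threshold $(1-2\tau)/5$.

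The main technical obstacle is the effective mean ergodic theorem for the unipotent $\bar N$-action: individual matrix coefficients $\langle\pi(\bar n_{l})\psi,\psi\rangle$ do not decay, so the proof must exploit cancellation across the average, and the dependence of $\kappa$ on $\tau$ must be tracked carefully to obtain the sharp thresholds $(1-2\tau)/3$ and $(1-2\tau)/5$; this is in the spirit of \cite{KelmerShrinking16}.
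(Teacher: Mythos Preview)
Your outline is essentially the paper's own proof: Borel--Cantelli with the volume estimate $\mu(\cB_{\delta})\asymp\delta^{2}$ for Part~(1) (and you are right that the statement as printed has ``unbounded'' where ``bounded'' is intended), and for Parts~(2)--(3) an effective mean ergodic theorem for $\beta_{T}$ obtained by expanding $\|\beta_{T}\vf_{0}\|^{2}$ into matrix coefficients, then a Chebyshev/variance bound on the miss set $\cC_{T,\delta}$ via a smooth bump $F_{\delta}$ with $\mu(F_{\delta})\asymp\delta^{2}$, $\cS_{2,0}(F_{\delta})\asymp\delta$, $\cS_{2,1}(F_{\delta})\asymp 1$, followed by a dyadic Borel--Cantelli along $k=2^{l}$; the union bound over an $O(\delta_{k}^{-2})$-net in $\Omega$ gives the $5$ in Part~(3).

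One correction to your closing paragraph: for \emph{smooth} vectors the individual matrix coefficients $\langle\pi(\bar n_{l})\psi,\psi\rangle$ \emph{do} decay. Writing $\bar n_{l}=ka_{y}k'$ one has $y\asymp l^{2}$, so \eqref{e:decay} gives $|\langle\pi(\bar n_{l})\psi,\psi\rangle|\ll_{\epsilon}|l|^{2\tau-1+\epsilon}\cS_{2,1}(\psi)^{1+\epsilon}\cS_{2,0}(\psi)^{1-\epsilon}$. The paper's derivation of the mean ergodic theorem is exactly the double-sum expansion you describe, followed by the triangle inequality and this pointwise decay; no additional cancellation across the average is needed or used. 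Your earlier description of the derivation is correct, and your final paragraph contradicts it.
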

\begin{rem}
This is a partial result because, even in the optimal setting when $\tau=0$, the lower bound $\eta>1/2$ in (1) is much larger than the upper bound $\eta<\frac{1-2\tau}{3}$ in (2). It is reasonable that the correct upper bound is also $\eta<1/2$ but we are not able to show this here. We note that for similar shrinking target problems, when the shrinking targets are spherical (i.e, right-$K$ invariant), by a similar argument one can get a lower bound that is the same as the upper bound. In fact this is shown for unipotent flows on more general homogenous spaces \cite{KelmerShrinking16}.  We also note that the exponent in (3) is even smaller because we require a much stronger form of approximation, that is, that a single orbit $\cO_k(x)$ approximate simultaneously all target points in $\Omega$. \end{rem}

Our main tool for the proof will be  an effective mean ergodic theorem for the unipotent flow $u_t=\bar{n}_t$ on $\G\bk G$ (we use the notation $u_t$ to indicate that the same results holds for any unipotent flow). For any $T>0$ let $\beta_T$ denote the averaging operator on $C^\infty_c(\G\bk G)$ given by
\begin{equation}\label{e:betaN}
\beta_T(\vf)(x)=\frac{1}{2T+1}\sum_{|k|\leq T}\vf(x u_k).
\end{equation}
Since the unipotent flow is ergodic, the mean ergodic theorem implies that $\|\beta_T(\vf)-\int_{\G\bk G}\vf d\mu)\|\to0$ as $T\to\infty$ for any  $\vf\in L^2(\G\bk G)$. Using the decay of matrix coefficients we show the following effective result.  
\begin{prop}\label{p:Ergodic}
Let $\tau=\tau(\G)$ measure the spectral gap for $\G$. Then 
for any smooth $\vf\in C^\infty_c(\G\bk G)$ we have 
$$\|\beta_T(\vf)-\int_{\G\bk G}\vf d\mu\|^2\ll_\epsilon \frac{\cS_{2,1}^{1+\epsilon}(\vf)\cS_{2,0}^{1-\epsilon}(\vf)}{T^{1-2\tau+\epsilon}}.$$
\end{prop}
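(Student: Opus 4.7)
The plan is to reduce to the mean-zero case, expand the $L^2$-norm squared of $\beta_T(\vf)$ into matrix coefficients of the unipotent flow, and then apply the decay estimate \eqref{e:decay} to each of these coefficients.

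First, I would replace $\vf$ by $\vf_{0}=\vf-\int_{\G\bk G}\vf\,d\mu$, which lies in $L^{2}_{0}(\G\bk G)$ and has Sobolev norms controlled by those of $\vf$ (since the constant term has $L^{2}$-mass bounded by $\cS_{2,0}(\vf)$). Since $\beta_T$ is linear and $\beta_T$ of a constant is that constant, it suffices to bound $\|\beta_T(\vf_0)\|^{2}$.

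Next, I would expand
\begin{equation*}
\|\beta_T(\vf_0)\|^{2}=\frac{1}{(2T+1)^{2}}\sum_{|k|,|l|\le T}\langle \pi(u_{k-l})\vf_0,\vf_0\rangle
=\frac{1}{(2T+1)^{2}}\sum_{|m|\le 2T}(2T+1-|m|)\langle \pi(u_m)\vf_0,\vf_0\rangle,
\end{equation*}
so in particular
\begin{equation*}
\|\beta_T(\vf_0)\|^{2}\le \frac{1}{2T+1}\sum_{|m|\le 2T}|\langle \pi(u_m)\vf_0,\vf_0\rangle|.
\end{equation*}
The key step is then to apply \eqref{e:decay} to each matrix coefficient. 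For this I need the Cartan decomposition of $u_m=\bar n_m$. A direct computation gives $\|\bar n_m\|^{2}=\tr(\bar n_m^{t}\bar n_m)=2+m^{2}$, and writing $\bar n_m=k a_y k'$ shows $y+y^{-1}=2+m^{2}$, so $y\asymp(1+|m|)^{2}$ for $|m|$ large. Plugging into \eqref{e:decay} with $\psi=\vf=\vf_0$ yields
\begin{equation*}
|\langle \pi(u_m)\vf_0,\vf_0\rangle|\ll_{\epsilon}(1+|m|)^{2\tau-1+\epsilon}\,\cS_{2,1}(\vf_0)^{1+\epsilon}\cS_{2,0}(\vf_0)^{1-\epsilon}.
\end{equation*}

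Finally, I would sum this geometric-type bound: since $2\tau-1+\epsilon>-1$ for small $\epsilon$, the sum $\sum_{|m|\le 2T}(1+|m|)^{2\tau-1+\epsilon}\asymp T^{2\tau+\epsilon}$. Dividing by $2T+1$ gives
\begin{equation*}
\|\beta_T(\vf_0)\|^{2}\ll_{\epsilon}\frac{T^{2\tau+\epsilon}}{T}\,\cS_{2,1}(\vf_0)^{1+\epsilon}\cS_{2,0}(\vf_0)^{1-\epsilon}=T^{-(1-2\tau-\epsilon)}\cS_{2,1}(\vf_0)^{1+\epsilon}\cS_{2,0}(\vf_0)^{1-\epsilon},
\end{equation*}
which is the desired bound after renaming $\epsilon$. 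The only nontrivial bookkeeping is tracking the Sobolev norms and verifying the Cartan $y$-parameter of $\bar n_m$; neither is a serious obstacle since the $KAK$ computation for $\SL_2(\R)$ is explicit and \eqref{e:decay} already packages the interpolation between $L^{2}$ and smooth norms in the form needed.
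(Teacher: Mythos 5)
Your proposal is correct and follows essentially the same route as the paper: reduce to the mean-zero part, expand $\|\beta_T(\vf_0)\|^2$ into a sum of matrix coefficients $\langle\pi(u_m)\vf_0,\vf_0\rangle$ weighted by $2T+1-|m|$, compute the Cartan parameter $y+y^{-1}=2+m^2$ so $y\asymp(1+|m|)^2$, apply \eqref{e:decay}, and sum. The bound $T^{-(1-2\tau-\epsilon)}$ you obtain is exactly what the paper's own proof derives (its statement's $+\epsilon$ is just a sign convention on $\epsilon$), so there is nothing to add.
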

\begin{proof}
Let $\vf_0=\vf-\int_{\G\bk G}\vf d\mu$ then $\vf_0\in L^2_0(\G\bk G)$ and $\beta_T(\vf)-\int_{\G\bk G}\vf d\mu=\beta_T(\vf_0)$. Now expand
\begin{eqnarray*}
\|\beta_T(\vf_0)\|^2&=&\langle \beta_T\vf_0,\beta_n\vf_0\rangle\\
&=&\frac{1}{(1+2T)^2}\sum_{|l|\leq T}\sum_{|k|\leq T}\langle \pi(u_k)\vf_0,\pi(u_l)\vf_0\rangle\\
&=&\frac{1}{(1+2T)^2}\sum_{|l|\leq T}\sum_{|k|\leq T}\langle \pi(u_{k-l})\vf_0,\vf_0\rangle\\
\end{eqnarray*}
Making a change of index summation and changing the order of summation we get
\begin{eqnarray*}
\|\beta_T(\vf_0)\|^2
&=&\frac{1}{(1+2T)^2}\sum_{|l|\leq T}\sum_{k=-T-l}^{T-l}\langle \pi(u_{k})\vf_0,\vf_0\rangle\\
&=&\frac{1}{(1+2T)^2}\sum_{|k|\leq 2T}\langle \pi(u_{k})\vf_0,\vf_0\rangle\#\{|l|\leq T: |l+k|\leq T\}\\
&\leq& \frac{1}{2T+1}\sum_{|k|\leq 2T}|\langle \pi(u_{k})\vf_0,\vf_0\rangle |
\end{eqnarray*}
Writing 
$u_t=ka_yk'$ with $y\geq 1$ and $k,k'\in K$ and comparing Hilbert-Schmidt norms we see that $2+t^2=y+y^{-1}$.  Using the decay of matrix coefficients \eqref{e:decay} we can bound
$$|\langle \pi(u_{k})\vf_0,\vf_0\rangle|\ll _\epsilon |k|^{2\tau-1+2\epsilon}
\cS_{2,1}^{1+2\epsilon}(\vf_0)\cS_{2,0}^{1-2\epsilon}(\vf_0),
$$
and hence
\begin{eqnarray*}
\|\vf_0\|^2&\ll_\epsilon & \frac{\cS_{2,1}^{1+2\epsilon}(\vf_0)\cS_{2,0}^{1-2\epsilon}(\vf_0)
}{2T+1}(1+2\sum_{k=1}^{2T}\frac{1}{|k|^{1-2\tau-2\epsilon}})\ll\frac{\cS_{2,1}^{1+2\epsilon}(\vf_0)\cS_{2,0}^{1-2\epsilon}(\vf_0)
}{T^{1-2\tau-2\epsilon}}.
\end{eqnarray*}
Finally, from orthogonality $\cS_{2,0}(\vf_0)\leq \cS_{2,0}(\vf)$ and since for any derivative $\cD\vf_0=\cD\vf$ we also have $\cS_{2,1}(\vf_0)\leq \cS_{2,1}(\vf)$, concluding the proof.
\end{proof}


Using the effective mean ergodic theorem as a variance estimate, we can estimate the measure of points whose orbit miss a small set $\cB_\delta$.
Explicitly, we show
\begin{prop}\label{p:finite}
Let $\cC_{T,\delta}=\{x\in \G\bk G: \cO_T(x)\cap \cB_\delta=\emptyset\}$. Then
$$\mu(\cC_{T,\delta})\ll_\epsilon \frac{1}{T^{1-2\tau+\epsilon}\delta^{3+\epsilon}}$$
\end{prop}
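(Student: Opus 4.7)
The plan is to apply Chebyshev's inequality to a smoothed indicator function of $\cB_\delta$, with the variance estimate supplied by Proposition \ref{p:Ergodic}. Fix a nonnegative $\vf_\delta \in C^\infty_c(\G\bk G)$ supported in $\cB_\delta$ and identically equal to $1$ on $\cB_{\delta/2}$. If $x \in \cC_{T,\delta}$, then the orbit $\cO_T(x)$ avoids the support of $\vf_\delta$, so $\beta_T(\vf_\delta)(x) = 0$, and consequently
$$
\mu(\cC_{T,\delta})\Bigl(\int_{\G\bk G}\vf_\delta\,d\mu\Bigr)^2 \;\leq\; \Big\|\beta_T(\vf_\delta) - \int_{\G\bk G}\vf_\delta\,d\mu\Big\|_2^2 .
$$
It then suffices to control $\int \vf_\delta\,d\mu$, $\cS_{2,0}(\vf_\delta)$ and $\cS_{2,1}(\vf_\delta)$ in terms of $\delta$ and to invoke Proposition \ref{p:Ergodic}.

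To obtain the needed estimates, I would work in the coordinates $g = n_x a_y \bar n_{x'}$, in which $\cA_\delta$ is a box of dimensions roughly $\delta\sqrt y \times \delta \times 1$ in $(x,y,x')$ that sits inside a compact region depending only on $v$. Since $dg = dx\,dy\,dx'/y^2$ and $y \asymp_v 1$ on this region, a direct computation gives $\mu(\cB_\delta) \asymp_v \delta^2$, so that $\int \vf_\delta\,d\mu \asymp_v \delta^2$ and $\cS_{2,0}(\vf_\delta) = \|\vf_\delta\|_2 \ll_v \delta$. For the first order norm, I would construct $\vf_\delta$ as a product of smooth bumps in the three coordinates, with partial derivatives of order $1/\delta$ in the $x$ and $y$ variables and of order $1$ in $x'$. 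Since any basis vector $X_i$ of $\fg$ acts on the compact support region as a bounded linear combination of $\partial_x, \partial_y, \partial_{x'}$, this gives $\|X_i \vf_\delta\|_\infty \ll_v 1/\delta$, and combined with the $O(\delta^2)$ volume of the support, $\cS_{2,1}(\vf_\delta) \ll_v 1$.

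Plugging these estimates into Proposition \ref{p:Ergodic} yields
$$
\Big\|\beta_T(\vf_\delta) - \int_{\G\bk G}\vf_\delta\,d\mu\Big\|_2^2 \;\ll_\epsilon\; \frac{\cS_{2,1}^{1+\epsilon}(\vf_\delta)\cS_{2,0}^{1-\epsilon}(\vf_\delta)}{T^{1-2\tau+\epsilon}} \;\ll_{\epsilon,v}\; \frac{\delta^{1-\epsilon}}{T^{1-2\tau+\epsilon}},
$$
and dividing by $(\int \vf_\delta\,d\mu)^2 \asymp_v \delta^4$ produces the claimed bound $\mu(\cC_{T,\delta}) \ll_\epsilon T^{-(1-2\tau+\epsilon)} \delta^{-3-\epsilon}$. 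The main obstacle is the construction of $\vf_\delta$ with sharp Sobolev control: the argument is conceptually routine because $\cA_\delta$ lives in a compact region with a well-adapted coordinate system, but some care is needed to track how the anisotropy of $\cA_\delta$ interacts with the powers of $y$ in the Haar measure and to check that the left-invariant derivatives from a fixed basis of $\fg$ are uniformly comparable to the coordinate partials on the compact support region.
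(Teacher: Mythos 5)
Your proposal is correct and follows essentially the same route as the paper: a smoothed bump function supported in $\cB_\delta$ built as a product of one-variable bumps in the $n_x a_y \bar n_{x'}$ coordinates, the estimates $\int \vf_\delta\,d\mu \asymp_v \delta^2$, $\cS_{2,0}(\vf_\delta) \ll_v \delta$, $\cS_{2,1}(\vf_\delta) \ll_v 1$, and the Chebyshev-type lower bound $\mu(\cC_{T,\delta})(\int\vf_\delta\,d\mu)^2 \leq \|\beta_T(\vf_\delta)-\int\vf_\delta\,d\mu\|_2^2$ combined with Proposition \ref{p:Ergodic}. The only cosmetic difference is that the paper normalizes the bumps to have mean one rather than requiring $\vf_\delta \equiv 1$ on $\cB_{\delta/2}$, which changes nothing.
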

\begin{proof}
Let $\rho\in C^\infty_c(\R)$ be positive  supported in $(-1/2,1/2)$ with mean one. 
Define a function $f_\delta\in C^\infty_c(G)$ by 
$$f_\delta(n_x a_y \bar n_{x'})=\rho(\frac{x-\sqrt{y}{v_1}}{\delta})\rho(\frac{y-1/v_1}{\delta})\rho(x').$$
and let $F_\delta\in C^\infty_c(\G\bk G)$ be the corresponding $\G$-invariant function,
$$F_\delta(\G g)=\sum_{\g\in \G} f_\delta(\g g).$$ 
Clearly $f_\delta$ is supported on $\cA_\delta$ and $F_\delta$ is supported on $\cB_\delta$. 

Moreover, since $\cA_\delta\subseteq \cA_{1/2}$ is contained in some fixed compact set, there is some $C>0$ (depending only on $v$) such that $\cA_\delta$ is contained in a union of $C$ fundamental domains for $\G\bk G$. Consequently, we also have that $\mu(F_\delta)\asymp_v \mu(\cA_\delta) \asymp\delta^2$, that
$$\cS_{2,0}(F_\delta)\asymp_v (\int_{\cF}|f_\delta|^2d\mu)^{1/2}\asymp \sqrt{\mu(\cA_\delta)}\asymp \delta$$
and similarly $\cS_{2,1}(F_\delta)\asymp_v 1$. With these estimates, Proposition  \ref{p:Ergodic} implies that 
$$\| \beta_T(F_\delta)-\mu(F_\delta)\|^2\ll_\epsilon \delta^{1-\epsilon} T^{2\tau-1-\epsilon}.$$
On the other hand, since $\beta_T(F_\delta)(x)=0$ for all $x\in \cC_{T,\delta}$ we can bound from bellow 
$$\| \beta_T(F_\delta)-\mu(F_\delta)\|^2\geq \int_{\cC_{T,\delta}}|\beta_T(F_\delta)-\mu(F_\delta)|^2d\mu=\mu(\cC_{T,\delta})\delta^4$$
from which the result follows.
\end{proof}

We now go back to the shrinking target problem and give the 
\begin{proof}[Proof of Theorem \ref{t:shrinking}]
First, noting that $\mu(\cB_{\delta})\asymp \delta^2$ if we take $\delta_k= k^{-\eta}$ with $\eta>1/2$ the series 
$$\sum_k\mu(\cB_{\delta_k})\asymp \sum_k \frac{1}{k^{2\eta}}<\infty,$$
converges and hence by the easy half of the Borrel-Cantelli lemma, for almost all $x\in \G\bk G$ we have that  $\{k: x\bar{n}_k\in \cB_{\delta_k}\}$ is bounded.

Next, for the lower bound for a fixed target point $v$, assume that $\delta_k=k^{-\eta}$ with $0<\eta<\frac{1-2\tau}{3}$ and let $\cC_v\subseteq \G\bk G$ denote the set of all points such that for any $T\in \N$ there is $k\geq T$ with $\cO_k(x)\cap B_{\delta_k}(v)=\emptyset$,
that is,
$$\cC_v=\bigcap_{T\in \N}\bigcup_{k\geq T}\cC_{k},$$
with $\cC_k=\cC_{k,\delta_k}$ the set of points for which $\cO_k(x)\cap \cB_{\delta_k}(v)=\emptyset$.
Now consider the sets
$$\tilde\cC_k=\{x\in \G\bk G: \cO_{k}(x)\cap  \cB_{\delta_{2k}}(v)=\emptyset\},$$
and note that, since the orbits $\cO_{k}(x)$ are increasing sets and the targets $\cB_{\delta_k}(v)$ are decreasing, we have that 
$$\bigcup_{l=k}^{2k}\cC_l=\{x\in \G\bk G: \exists  l\in[k, 2k],\; \cO_{l}(x)\cap \cB_{\delta_l}(v)=\emptyset\}\subseteq
\tilde\cC_k.$$
We thus have that 
$$\cC_v=\bigcap_{T\in \N}\bigcup_{l\geq \log(T)}\bigcup_{k=2^l}^{2^{l+1}}\cC_{k}\subseteq \bigcap_{T\in \N}\bigcup_{l\geq \log(T)}\tilde\cC_{2^l}.$$
By Proposition \ref{p:finite} we can estimate
$\mu(\tilde\cC_{2^l})\ll_{\epsilon} \frac{1}{2^{l(1-2\tau-3\eta+\epsilon)}}$
and hence
 \begin{eqnarray*}
 \mu(\bigcup_{l\geq \log(T)}\tilde\cC_{2^l})&\leq& \sum_{l\geq \log{T}}\mu(\tilde\cC_{2^l})\\
 &\ll_\epsilon&  \sum_{l\geq \log{T}}\frac{1}{2^{l(1-2\tau-3\eta+\epsilon)}}.
 \end{eqnarray*}
Now, from our assumption $1-2\tau-3\eta>0$ and taking $\epsilon=\frac{1-2\tau-3\eta}{2}$ we can estimate
\begin{eqnarray*}
 \mu(\bigcup_{l\geq \log(T)}\tilde\cC_{2^l})\ll  \sum_{l\geq \log(T)}2^{-\epsilon}\ll T^{-\epsilon}
\end{eqnarray*}
implying that $\mu(\cC_v)=0$.

Finally, for the uniform bound, let $\delta_k=k^{-\alpha}$ with  $0<\eta<\alpha<\frac{1-2\tau}{5}$ and for each $k$ let $\{v_{k,i}\}_{i=1}^{m_k}\subseteq \Omega$ be $\delta_k$-dense in $\Omega$ (so that $m_k\asymp_\Omega \delta_k^{-2}$). 
Moreover, we choose our points so that for $k=2^l$ the set $\{v_{k,i}\}_{i=1}^{m_k}$ contains all points $v_{k',i}$ with $k'\leq k$. Now,  let $\cC_\Omega\subseteq \G\bk G$ denote the set of all points such that for any $T\in \N$ there is $k\geq T$ with $\cO_k(x)\cap B_{\delta_k}(v_{k,i})=\emptyset$ for some $1\leq i\leq m_k$, that is, 
$$\cC_\Omega=\bigcap_{T\in \N}\bigcup_{k\geq T}\bigcup_{i=1}^{m_k}\cC_{k,i},$$
with $\cC_{k,i}=\{x\in \G\bk G: \cO_k(x)\cap B_{\delta_k}(v_{k,i})=\emptyset\}$. As before, we have 
$$\cC_\Omega\subseteq \bigcap_{T\in \N}\bigcup_{l\geq \log(T)}\bigcup_{i=1}^{m_{2^{l+1}}}\tilde\cC_{2^l,i},$$
with $\tilde\cC_{k,i}=\{x\in \G\bk G: \cO_{k}(x)\cap  \cB_{\delta_{2k}}(v_{2k,i})=\emptyset\}$. By Proposition \ref{p:finite}, for each $i=1,\ldots, m_{2^{l+1}}$ we can estimate,
$\mu(\tilde\cC_{2^l,i})\ll_{\epsilon} \frac{1}{2^{l(1-2\tau-3\alpha+\epsilon)}}$, and since for each $l$ there are $\asymp 2^{2\alpha l}$ such sets
we can bound 
 \begin{eqnarray*}
 \mu(\cC_\Omega) &\ll_\epsilon&  \sum_{l\geq \log{T}}\frac{1}{2^{l(1-2\tau-5\alpha+\epsilon)}}.
 \end{eqnarray*}
Taking $\epsilon=\frac{1-2\tau-5\alpha}{2}>0$ we get that $\mu(\cC_\Omega) \ll T^{-\epsilon}$ for all $T>0$ and hence $\mu(\cC_\Omega)=0$. Now let $x\in \G\bk G,\; x\not\in\cC_\Omega$ and let $v\in \Omega$.
For each $k$ let $v_{k,i}$ be $\delta_k$  close to $v$. Then for all sufficiently large $k$, $\cO_k(x)\cap \cB_{\delta_k}(v_{k,i})\neq \emptyset$. Since $\norm{v-v_{k,i}}\leq \delta_k,$ we have that, for $k$ sufficiently large, $\cB_{\delta_k}(v_{k,i})\subseteq \cB_{2\delta_k}(v)\subseteq \cB_{k^{-\eta}}(v)$, implying that $\cO_k(x)\cap \cB_{k^{-\eta}}(v)\neq \emptyset$ as well. 
\end{proof}

\subsection{Conclusion}

Now combining the shrinking target results in Theorem \ref{t:shrinking} with Lemma \ref{l:GammaToH} we get estimates on the critical exponents giving the
\begin{proof}[Proof of Theorem \ref{t:main}]
Let $0\neq v\in \R^2$ and (perhaps after replacing $v$ by $\g v$ for some $\g\in \G$) we may assume that $v_1v_2\neq 0$.
For any $\delta\in (0,1/2)$ let $\cA_\delta=\cA_\delta(v)$ and $\cB_\delta=\cB_\delta(v)$ be as above.

First to show that for almost all $u\in \R^2$ we have $\mu(u,v)\leq 1/2$ fix some $\eta>1/2$ and let $U\subseteq \R^2$ denote the set of all $u\in \R^2$ such that there is a sequence $\g_k$ with $\|\g_ku-v\|\leq \|\g_k\|^{-\eta}$. 
For each $u\in U$ let 
$g_u=\left(\begin{smallmatrix} u_2^{-1} & u_1\\ 0 & u_2\end{smallmatrix}\right)$ and let $\cU\subseteq G$ be defined by 
$\cU=\{g_u\bar{n}_x: u\in U,\; |x|\leq 1/2\}$. Let $\eta>\alpha>1/2$, then by the first part of Lemma \ref{l:GammaToH}, for any $g\in \cU$, the set $\{k: x\bar{n}_k\in \cB_{k^{-\alpha}}\}$ is unbounded, and hence 
$$\{\G g: g\in U\}\subseteq \{x\in \G\bk G: \{k: x\bar{n}_k\in \cB_{k^{-\alpha}}\} \mbox{ is unbounded}\}.$$
By the first part of Theorem \ref{t:shrinking} the set on the right has measure zero.  We thus get that the set $\{\G g:g\in \cU\}\subset \G\bk G$ is a null set. But then the set $\cU\subset G$ and hence also $U\subseteq \R^2$ must also have measure zero. 
This shows that for almost all $u\in \R^2$ the set $\{\g: \|\g u-v\|\leq \|\g\|^{-\eta}\}$ is bounded so $\mu(u,v)\leq \eta$ for almost all $u\in \R^2$. Since this holds for any $\eta>1/2$ we get the upper bound  $\mu(u,v)\leq 1/2$ for almost all $u\in \R^2$.

Next to show that for any $v\in \R^2$ for almost all $u\in \R^2$ we have $\hat\mu(u,v)\geq \frac{1-2\tau}{3}$ fix some $\alpha<\frac{1-2\tau}{3}$. 
Let $\alpha<\eta<\frac{1-2\tau}{3}$, let $\delta_k=k^{-\eta}$ and let $\cC\subset \G\bk G$ be the set of all points $x\in \G\bk G$ such that for every $T\in \N$ there is $k\geq T$ with $\cO_k(x)\cap B_{\delta_k}=\emptyset$. Then for any $g\in G\setminus \cC_0$ and any $u=g\left(\begin{smallmatrix} 0\\ 1\end{smallmatrix}\right)$ by the second part of Lemma \ref{l:GammaToH}, $\G_T u\cap B_{1/T^\alpha}(v)\neq \emptyset$ for all sufficiently large $T$.  By the second part of Theorem \ref{t:shrinking} we have that $\mu(\cC)=0$ and hence  the set $\cC_0=\{g\in G: \G g\in \cC\}$ is a null set and the set $\{g\left(\begin{smallmatrix} 0\\ 1\end{smallmatrix}\right): g\in G\setminus \cC_0\}$ is  set of full measure. This shows that for almost all $u\in \R^2$, we have that $\G_T u\cap B_{1/T^\alpha}(v)\neq \emptyset$ for all sufficiently large $T$, and hence $\hat\mu(u,v)\geq \alpha$. Since this holds for any $\alpha<\frac{1-2\tau}{3}$ we get that $\hat\mu(u,v)\geq\frac{1-2\tau}{3}$ for almost all $u\in \R^2$.

Finally, for the uniform bound, let $\Omega_\infty=\{v\in \R^2: |v_1|\geq 1,\; |v_2|\geq 1\}$. Recall that any orbit $\G v$ is either dense or a lattice, and hence must intersect $\Omega_\infty$, and since $\hat\mu(u,v)=\hat\mu( u,\g v)$ it is enough to consider target points $v\in \Omega_\infty$. Next, since we can write $\Omega_\infty=\bigcup\Omega_i$ as a union of countably many compact sets (all bounded away from the axis), it is enough to show for each $i\in\N$, for almost all $u\in \R^2$ we have that $\hat\mu(u,v)\geq \frac{1-2\tau}{5}$ for all $v\in \Omega_i$.  This follows from the third part of  Theorem \ref{t:shrinking} by the same argument as above.
\end{proof}

%


\begin{thebibliography}{GGN15}

\bibitem[GGN14]{GhoshGorodnikNevo14}
A.~{Ghosh}, A.~{Gorodnik}, and A.~{Nevo}.
\newblock {Best possible rates of distribution of dense lattice orbits in
  homogeneous spaces}.
\newblock J. Reine Angew. Math. to appear.

\bibitem[GGN15]{GhoshGorodnikNevo15}
Anish Ghosh, Alexander Gorodnik, and Amos Nevo.
\newblock Diophantine approximation exponents on homogeneous varieties.
\newblock In {\em Recent trends in ergodic theory and dynamical systems},
  volume 631 of {\em Contemp. Math.}, pages 181--200. Amer. Math. Soc.,
  Providence, RI, 2015.
  
 \bibitem[GK15]{GhoshKelmer15}
A.~{Ghosh} and D.~{Kelmer}, \emph{{Shrinking Targets for Semisimple Groups}},
  ArXiv e-prints (2015).

\bibitem[GW07]{GorodnikWeiss07}
Alex Gorodnik and Barak Weiss.
\newblock Distribution of lattice orbits on homogeneous varieties.
\newblock {\em Geom. Funct. Anal.}, 17(1):58--115, 2007.

\bibitem[Kel16]{KelmerShrinking16}
D.~Kelmer
\newblock Shrinking target problems for unipotent flows
\newblock in preparation.


\bibitem[KS03]{KimSarnak2003}
H.~Kim and P.~Sarnak.
\newblock Refined estimates towards the {R}amanujan and {S}elberg conjectures.
\newblock {\em J. Amer. Math. Soc.}, 16(1):175--181, 2003.

\bibitem[Led99]{Ledrappier99}
Fran{\c{c}}ois Ledrappier.
\newblock Distribution des orbites des r\'eseaux sur le plan r\'eel.
\newblock {\em C. R. Acad. Sci. Paris S\'er. I Math.}, 329(1):61--64, 1999.

\bibitem[LN12a]{LaurentNogueira12a}
Michel Laurent and Arnaldo Nogueira.
\newblock Approximation to points in the plane by {${\rm SL}(2,\Bbb
  Z)$}-orbits.
\newblock {\em J. Lond. Math. Soc. (2)}, 85(2):409--429, 2012.

\bibitem[LN12b]{LaurentNogueira12b}
Michel Laurent and Arnaldo Nogueira.
\newblock Inhomogeneous approximation with coprime integers and lattice orbits.
\newblock {\em Acta Arith.}, 154(4):413--427, 2012.

\bibitem[MW12]{MaucourantWeiss12}
Fran{\c{c}}ois Maucourant and Barak Weiss.
\newblock Lattice actions on the plane revisited.
\newblock {\em Geom. Dedicata}, 157:1--21, 2012.

\bibitem[Nog02]{Nogueira02}
Arnaldo Nogueira.
\newblock Orbit distribution on {$\Bbb R^2$} under the natural action of {${\rm
  SL}(2,\Bbb Z)$}.
\newblock {\em Indag. Math. (N.S.)}, 13(1):103--124, 2002.

\bibitem[Pol11]{Pollicott11}
M.~Pollicott, 
\newblock Rates of convergence for linear actions of cocompact lattices on the complex plane
  \newblock {\em Integers} \textbf{11B} (2011), Paper No. A12, 7.
  
 \bibitem[Sin15]{Singhal15}
L.~{Singhal},
\newblock Diophantine exponents for standard linear actions of
  $\mathrm{SL}\_2$ over discrete rings in $\mathbb{C}$
 \newblock ArXiv e-prints (2015).

\bibitem[Ven10]{Venkatesh2010}
Akshay Venkatesh.
\newblock Sparse equidistribution problems, period bounds and subconvexity.
\newblock {\em Ann. of Math. (2)}, 172(2):989--1094, 2010.

\end{thebibliography}
\end{document}